\theoremstyle{plain}
\newtheorem{theorem}{Theorem}
\newtheorem{lemma}{Lemma}
\theoremstyle{definition}
\begin{document}

\title[maximal rank]
{Embeddings of general curves in projective spaces: the range of the quadrics}
\author{E. Ballico}
\address{Dept. of Mathematics\\
 University of Trento\\
38123 Povo (TN), Italy}
\email{ballico@science.unitn.it}
\thanks{The author was partially supported by MIUR and GNSAGA of INdAM (Italy).}
\subjclass{14H50; 14H51}
\keywords{postulation; curve with general moduli; maximal rank conjecture; quadric hypersurface}

\begin{abstract}
Let $C \subset \mathbb  {P}^r$ a general embedding of prescribed degree of a general
smooth curve with prescribed genus. Here we prove that either $h^0(\mathbb {P}^r,\mathcal {I}_C(2)) =0$
or $h^1(\mathbb {P}^r,\mathcal {I}_C(2)) =0$ (a problem called the Maximal Rank Conjecture
in the range of quadrics).
\end{abstract}

\maketitle

\section{Introduction}\label{S1}

Let $C\subset \mathbb {P}^r$ be any projective curve. The curve $C$ is said to have {\it maximal rank} if for every
integer $x>0$ the restriction map $H^0(\mathbb {P}^r,\mathcal {O}_{\mathbb {P}^r}(x)) \to H^0(C,\mathcal
{O}_C(x))$ has maximal rank, i.e. either it is injective or it is surjective. 

For any curve $X$ and any spanned $L\in \mbox{Pic}(X)$ let $h_L: X \to \mathbb {P}^r$, $r:= h^0(X,L)-1$, denote the morphism induced by the complete
linear system $\vert L\vert$. Here we prove the following result, which improves one of the results in \cite{w}.

For all integers $g, r, d$ set $\rho (g,r,d):= (r+1)d-rg -r(r+1)$ (the Brill-Noether number for $g^r_d$'s on a curve of genus $g$). Fix integers $r\ge 3$, and $g \ge 3$. Fix a general $X\in \mathcal {M}_g$.
Brill-Noether theory says that $G^r_d(X)\ne \emptyset$ if and only if $\rho (g,r,d) \ge 0$ (equivalently, $W^r_d(X)\ne \emptyset$ if and only if $\rho (g,r,d)\ge 0$) (\cite{acgh}, Ch. V). The Maximal Rank Conjecture in $\mathbb {P}^r$
asks if a general embedding in $\mathbb {P}^r$ of a general curve has maximal rank. Since this is true for non-special embeddings (\cite{be1} if $r=3$, \cite{be3} if $r>3$), we only need to consider triples $(g, r, d)$ with $d<g+r$
and $\rho (g,r,d)\ge 0$. For these triples of integers Brill-Noether theory gives $W^r_d(X)\ne \emptyset$, that $W^r_d(X)$ has
pure dimension $\rho (g,r,d)$  (it is also irreducible
if $\rho (g,r,d) >0$) and that $W^r_d(X) \ne W^{r+1}_d(X)$, i.e. $h^0(X,L) = r+1$ for a general $L\in W^r_d(X)$ (\cite{acgh}, Ch. V) (in the case $\rho (g,r,d) =0$ we have $W^{r+1}_d(X)=\emptyset$). Hence $h^1(X,L)=r+1$
for a general $L\in W^r_d(X)$ (or for all $L\in W^r_d(X)$ if $\rho (g,r,d)=0$). For this range of triples $(g,r,d)$ it is very easy to prove that a general $L\in W^r_d(X)$ is very ample (e.g., see the proof of \cite{s0}, Theorem at pages 26-27). 

In this paper we answer a question raised in \cite{w} (J. Wang called it the Maximal Rank Conjecture for quadrics).

\begin{theorem}\label{i1}
Fix integers $g\ge 3$, $r\ge 2$ and $d$ such that $\rho (g,r,d)\ge 0$ and $d \le g+r$. Fix a general $L\in W^r_d(X)$. Then the symmetric multiplication rank $\mu _L: S^2(H^0(X,L)) \to H^0(X,L^{\otimes 2})$ has maximal rank, i.e.
it is either injective or surjective.
\end{theorem}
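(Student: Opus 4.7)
The strategy is to recast the symmetric multiplication map as a restriction of quadrics. Setting $C := h_L(X) \subset \mathbb{P}^r$, the map $\mu_L$ is identified with $H^0(\mathbb{P}^r, \mathcal{O}_{\mathbb{P}^r}(2)) \to H^0(C, \mathcal{O}_C(2))$, so the theorem amounts to $h^0(\mathbb{P}^r, \mathcal{I}_C(2)) \cdot h^1(\mathbb{P}^r, \mathcal{I}_C(2)) = 0$. The source has dimension $\binom{r+2}{2}$; the target has dimension $2d - g + 1 + h^1(X, L^{\otimes 2})$. I would first isolate the critical value $d^\ast$ at which the two dimensions agree, noting that outside a narrow window around $d^\ast$ one of $h^0(\mathcal{I}_C(2))$, $h^1(\mathcal{I}_C(2))$ must vanish by pure numerics once one controls the rank of $\mu_L$ from one side; only a small set of ``balanced'' triples $(g,r,d)$ is genuinely delicate.

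Next I would invoke semicontinuity: both $h^0$ and $h^1$ of $\mathcal{I}_C(2)$ are upper-semicontinuous on the universal Hilbert scheme over a suitable open subset of $\mathcal{M}_g$, so it is enough, for each triple $(g,r,d)$ in the range, to exhibit a single pair $(X_0, L_0)$ for which the desired vanishing holds. The natural candidates are reducible nodal curves on which limit linear series are well-understood. I would specialize $(X, L)$ to $(Y \cup_p E, L')$ where $E$ is a rational or elliptic tail and $L'$ is a limit $\mathfrak{g}^r_d$ in the sense of Eisenbud--Harris, and then relate the global sections of $\mathcal{I}_{C'}(2)$ to those of $\mathcal{I}_{C_Y}(2)$ and $\mathcal{I}_{C_E}(2)$ by a Mayer--Vietoris sequence along the node. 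This reduces the analysis to multiplication maps on the two components, glued by the condition imposed at $p$.

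The argument would then proceed by induction on $d$ (equivalently on $g - d + r$). The base cases are the extremal Brill--Noether locus $\rho(g,r,d) = 0$, where $W^r_d(X)$ is rigid enough for a direct analysis, and the non-special boundary $d = g + r$, already handled in \cite{be1,be3}. In the inductive step one specializes $(X, L)$ so that on the main component $Y$ one has a general element of a Brill--Noether locus to which the inductive hypothesis applies; a careful choice of the linear series on $E$ and of the vanishing sequence at $p$ then forces $\mu_{L'}$ to have the predicted rank on $Y \cup_p E$, and the conclusion propagates to the generic smoothing by semicontinuity.

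The principal obstacle is the combinatorial management of the degeneration: one must arrange that the restricted limit series on $Y$ lies in the correct Brill--Noether stratum, verify that $\rho$ does not become negative at the inductive step, and control the cokernel of $\mu$ on the reducible model in the balanced cases where $\binom{r+2}{2}$ is close to $h^0(X, L^{\otimes 2})$. A secondary difficulty is a handful of low $r$ or low $g$ borderline triples, together with configurations in which $L^{\otimes 2}$ is itself special, where the general inductive scheme must be supplemented by a direct check on an explicit reducible model.
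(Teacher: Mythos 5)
Your framework (recast $\mu_L$ as restriction of quadrics, reduce by semicontinuity to exhibiting one good degenerate model) matches the paper's, but the degeneration machinery you propose does not actually close the argument, and it diverges from what the paper does in a way that exposes a real gap. You want to specialize to $Y\cup_p E$ carrying an Eisenbud--Harris limit $\mathfrak{g}^r_d$ and then apply Mayer--Vietoris to $\mathcal{I}_{C'}(2)$. These two tools are incompatible as stated: a limit linear series on a compact-type curve consists of different aspects on the two components which do not glue to a single morphism to $\mathbb{P}^r$, so there is no embedded curve $C'$ and no ideal sheaf $\mathcal{I}_{C'}(2)$ to run Mayer--Vietoris on. If instead you insist on an honest embedded nodal model, you have left the limit-linear-series formalism and must prove that your nodal curve lies in a component of the Hilbert scheme whose general member is smooth with general moduli --- this smoothability-with-general-moduli statement is precisely the technical content the paper imports from Ballico--Ellia (the components $W(x,y;r)$ and their Lemmas 2.2--2.3 and Proposition 3.1), and it is absent from your sketch. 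Moreover, the sentence ``a careful choice of the vanishing sequence at $p$ then forces $\mu_{L'}$ to have the predicted rank'' is the entire difficulty restated, not an argument: nothing in your setup produces the rank estimate on the reducible model.

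The paper's mechanism for that rank estimate is concrete and you should note how it sidesteps your inductive scheme. Surjectivity of $\mu_L$ in the balanced-and-below range is already known (projective normality of general linearly normal curves, \cite{bf}), so only $h^0(\mathbb{P}^r,\mathcal{I}_C(2))=0$ in the range $2d+1-g>\binom{r+2}{2}$ remains. For that one takes a ``seed'' curve $Y$ with $h^i(\mathcal{I}_Y(2))=0$ for both $i$ (bijective $\mu$), and builds the degenerate model as a union of $Y$ with secant lines and rational normal curves through $r+2$ points of $Y$: any curve containing $Y$ automatically satisfies $h^0(\mathcal{I}(2))=0$, so no cohomological computation on the added pieces is needed --- only the Hilbert-scheme smoothing statements. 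There is no induction on $d$; each triple $(g,r,d)$ is handled in one step from the seed. (Your proposed induction on $d$ also interacts badly with $\rho\ge 0$, since $\rho$ jumps by $r+1$ as $d$ changes by $1$.) Finally, your ``handful of borderline triples'' is real but has a specific shape you did not identify: when $r$ is odd the balanced seed would require $\rho<0$, so the paper constructs it as $Y_1\cup B$ with $Y_1$ projectively normal in a hyperplane $H$ and $B$ an elliptic normal curve meeting $H$ along $r+1$ points of $Y_1$, verifying $h^i(\mathcal{I}_{Y_1\cup B}(2))=0$ by hand. Without supplying (i) a substitute for the smoothability results, (ii) an actual rank computation on the degenerate model, and (iii) the odd-$r$ seed construction, the proposal is a plan rather than a proof.
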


Since $\mu _L$ is obviously injective if $r=2$, to prove Theorem \ref{i1} we may assume $r\ge 3$. The surjectivity part in Theorem \ref{i1} is true (\cite{bf}, Theorem 1); this part corresponds to the triples $(g,r,d)$ with $r\ge 3$, $0\le g+r-d \le (r-1)/2$
and $(r+1)(g+r-d) \le g \le r(r-1)/2 +2(g+r-d)$.

Fix a general $X\in \mathcal {M}_g$. M. Teixidor i Bigas proved that if $d\le g+1$, then $\mu _L$ is injective for all $L\in \mbox{Pic}^d(X)$. See \cite{be2} and \cite{b} for the Maximal Rank Conjecture for $r\le 4$.

We work over an algebraically closed field with characteristic zero.

\section{Proof of Theorem \ref{i1}}\label{S2}

\begin{lemma}\label{e1}
Fix a set $S\subset \mathbb {P}^r$, $r\ge 3$, such that $\sharp (S) =r+2$ and $S$ is in linearly general position, i.e. any $r+1$ of its points span $\mathbb {P}^r$.
For each $P\in S$ let  $L_P \subset \mathbb {P}^r$ be a line such that $P\in L_P$. Fix any closed subset $T\subset \mathbb {P}^r$ such that $\dim (T) \le 1$. Then there exists
a rational normal curve $D \subset \mathbb {P}^r$ such that $S \subset D$, for each $P\in S$ the line $L_P$ is not the tangent line of $D$ at $P$ and $D\cap (T\setminus T\cap S)=\emptyset$.
\end{lemma}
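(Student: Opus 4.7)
The plan is to parameterize the variety $\mathcal{R}$ of rational normal curves through $S$, show it is irreducible of positive dimension, and check that each of the prescribed conditions fails only on a proper closed subset of $\mathcal{R}$, so that a general $D\in\mathcal{R}$ works.

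First I would use the classical fact that through $r+3$ points of $\mathbb{P}^r$ in linearly general position passes a unique rational normal curve: sending a general $Q\in\mathbb{P}^r$ to the unique RNC $D_Q\supset S\cup\{Q\}$ defines a dominant rational map $\mathbb{P}^r\dashrightarrow\mathcal{R}$ with one-dimensional fibers, so $\mathcal{R}$ is irreducible of dimension $r-1\ge 2$. To arrange $D\cap(T\setminus S)=\emptyset$, I would study the incidence $\mathcal{I}=\{(D,x)\in\mathcal{R}\times(T\setminus S):x\in D\}$. Over a generic point of an irreducible component $T_0$ of $T$ not contained in a hyperplane spanned by $r$ points of $S$, the fiber in $\mathcal{R}$ is finite, because $S\cup\{x\}$ lies in linearly general position and so determines a unique RNC. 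If instead $T_0$ lies in such a hyperplane $H$, then by B\'ezout $D\cap H$ consists exactly of the $r$ points of $S\cap H$, so every $D\in\mathcal{R}$ already satisfies $D\cap(T_0\setminus S)=\emptyset$. Either way the bad locus in $\mathcal{R}$ has dimension $\le\dim T\le 1<r-1$, so it is a proper closed subset.

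The main obstacle is the tangent condition at each $P\in S$. I would handle it by projecting from $P$: let $\pi_P\colon\mathbb{P}^r\dashrightarrow\mathbb{P}^{r-1}$ denote the projection, and extend it to a morphism $D\to\mathbb{P}^{r-1}$ that sends $P$ to $\tilde{p}_D:=\pi_P(T_PD)$; the image is a rational normal curve $\pi_P(D)$ of degree $r-1$ through the $r+1$ points $\pi_P(S\setminus\{P\})$, containing $\tilde{p}_D$. Thus we get a map $\varphi\colon\mathcal{R}\to\mathbb{P}^{r-1}$, $D\mapsto\tilde{p}_D$. A standard parameter count (the expected dimension of the space of RNCs in $\mathbb{P}^r$ through $r+2$ points with one prescribed tangent direction is zero) shows that the generic fiber of $\varphi$ is finite; since $\dim\mathcal{R}=r-1=\dim\mathbb{P}^{r-1}$, the map $\varphi$ is dominant, and hence the preimage of $\pi_P(L_P)$ is a proper closed subset of $\mathcal{R}$. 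Intersecting the $r+3$ resulting nonempty open subsets of the irreducible variety $\mathcal{R}$ (one from the $T$-avoidance, one for each $P\in S$) yields the required RNC $D$.
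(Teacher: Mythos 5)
Your proof is correct and follows essentially the same strategy as the paper's: both parameterize the irreducible $(r-1)$-dimensional family $E(S)$ of rational normal curves through $S$ and show that each imposed condition cuts out a proper closed subset, using $r-1\ge 2$. The one soft step is your treatment of the tangent condition: an \emph{expected}-dimension count bounds the dimension of a nonempty fiber of $\varphi$ from below, not from above, so it does not by itself show that the generic fiber is finite. The paper closes exactly this point by citing \cite{eh}, Theorem 1, part (b): the set of rational normal curves through $S$ with a prescribed tangent direction at a point of $S$ is either empty or a single point. Note also that you need much less than dominance of $\varphi$ --- only that $\varphi$ is not identically equal to $\pi_P(L_P)$, since a non-constant morphism from the irreducible $E(S)$ cannot have the whole space as the preimage of one point; even this weaker statement requires an argument (the cited result, or the vanishing $h^0(\mathcal{O}_{\mathbb{P}^1}(-1)^{\oplus (r-1)})=0$ obtained by twisting the normal bundle $\mathcal{O}_{\mathbb{P}^1}(r+2)^{\oplus (r-1)}$ of $D$ down by $2P+\sum_{P'\in S\setminus \{P\}}P'$), and that is the precise point to fill in. Your explicit B\'ezout discussion of components of $T$ lying in hyperplanes spanned by $r$ points of $S$ is sound; the paper absorbs that case into the observation that $E(S,Q)=\emptyset$ whenever $S\cup\{Q\}$ fails to be in linearly general position.
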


\begin{proof}
Let $E(S)$ be the set of all rational normal curves $C\subset \mathbb {P}^r$ containing $S$. For each $Q\in (\mathbb {P}^r\setminus S)$ set
$E(S,Q):= \{C\in E(S): Q\in C\}$. If $S\cup \{Q\}$ is in linearly general position, then $\sharp (E(S,Q)) =1$. If $S\cup \{Q\}$ is not in general position, then $E(S,Q)=\emptyset$.
Hence $E(S)$ is a quasi-projective irreducible variety of dimension $r-1$. For each $P\in S$ and any tangent vector of $\mathbb {P}^r$ at $P$ set $E(\nu ):= \{C\in E(S): \nu$ is
tangent to $C$ at $P\}$. Either $E(\nu )=\emptyset$ or $E(\nu )$ is a single point (part (b) of \cite{eh}, Theorem 1). Since $\dim (\cup _{Q\in (T\setminus T\cap S)} E(S,Q)) \le \dim (T)=1$, a general $D\in E(S)$  satisfies the thesis of the lemma.
\end{proof}

\vspace{0.3cm}

\qquad {\emph {Proof of Theorem \ref{i1}.}} Fix integers $x, y$ such that $y \ge 0$ and either $x \ge y+r$ or $y\ge 2 $ and $x \ge r$ and $y \le x-r +\lfloor (x-r-2)/(r-2)\rfloor$. In the papers \cite{be2} (case $r=3$) and \cite{be4} (case $r\ge 4$) the authors defined an irreducible
component $W(x,y;r)$ of the Hilbert scheme of $\mathbb {P}^r$ whose general member is a non-degenerate smooth curve of genus $y$ and degree $x$. If $x\ge r+y$, this is the component of $\mbox{Hilb}(\mathbb {P}^r)$ whose general element is a non-special curve,
while if $x < y+r$, then $h^1(E,\mathcal {O}_E(1)) = y+r-x$ and $E$ is linearly normal. If $\rho (y,r,x) \ge 0$, then a general $E\in W(x,y;r)$ has general moduli (\cite{be4}, Proposition 3.1). Since $\mu _L$ is obviously injective if $r=2$, to prove Theorem \ref{i1} we may assume $r\ge 3$. Hence we may assume that $L$ is very ample (e.g., see the proof of \cite{s0}, Theorem at pages 26-27). Since $W^r_d(X)$ is irreducible, the semicontinuity theorem
for cohomology gives that to prove Theorem \ref{i1} for the triple $(g,r,d)$ it is sufficient to find one $L\in W^r_d(X)\setminus W^{r+1}_d(X)$ such that $\mu _L$ has maximal rank. Since $h^0(X,L)=r+1$, we have $\dim S^2(H^0(X,L)) = \binom{r+2}{2}$.
By Gieseker-Petri theory we have $h^1(X,L^{\otimes 2}) = 0$. Hence $h^0(X,L^{\otimes 2})=2d+1-g$. The surjectivity part in Theorem \ref{i1} is true (\cite{bf}, Theorem 1). Hence we may assume $2d+1-g > \binom{r+2}{2}$. Set $c:= g+r-d$. Riemann-Roch gives $c = h^1(X,L)$. Since
a general non-special embedding of $X$ has maximal rank (\cite{be1} for $r=3$, \cite{be2} for $r\ge 4$), we may assume $c>0$. Since
$\rho (d,g,r) = (r+1)(g+r-c)-rg -r(r+1)$, the assumption $\rho (g,r,d) \ge 0$ is equivalent to $g \ge c(r+1)$. For all integers $t \ge 3$ and $b \ge 0$ set $g_{t,b}:= t(t-1)/2 + 2b$. Notice that
\begin{equation}\label{eqe1}
2(g_{r,b}+r-b) +1 -g_{r,b} = \binom{r+2}{2}
\end{equation}

We have $\rho (g_{r,b},r,g_{r,b}+r-b) = g_{r,b} -b(r+1)$. Hence $\rho (g_{r,b},r,g_{r,b}+r-b)\ge 0$ if and only if $g_{r,b} \ge b(r+1)$. Hence $\rho (g_{r,b},r,g_{r,b}+r-b)\ge 0$ if and only if $b \le r/2$.

\quad (a) Here we assume $c \le r/2$. Hence $\rho (g_{r,c},r,g_{r,c}+r-c) \ge 0$. Fix a general $E\in \mathcal {M}_g$ and a general $R\in W^r_{g_{r,c}+r-c}(E)$. Brill-Noether theory gives $h^0(E,R)=r+1$. By \cite{bf}, Theorem 1, $R$ is
very ample and the curve $h_R(E)$ is projectively normal. Hence $\mu _R$ is surjective. The case $b = c$ of (\ref{eqe1}) gives that $\mu _R$ is bijective. Hence $h^0(\mathbb {P}^r,\mathcal {I}_{h_R(E)}(2)) =0$. Since
$E$ has general moduli, we have $h_R(E)\in W(g_{r,c}+r-c,g_{r,c};r)$. Since $2(g+r-c)+1-g > \binom{r+2}{2}$,
we have $g > g_{r,c}$. We have $d = g+r-c$ and hence $g -g_{r,c} = d -(g_{r,c}+r-c)$. Let $A\subset \mathbb {P}^r$ be the union of $h_R(E)$ and $g-g_{r,c}$ general secant lines of $h_R(E)$.
We have $A\in W(d,g;r)$ (apply $c$ times \cite{be4}, Lemma 2.2). Since $A\supset E$ and $h^0(\mathbb {P}^r,\mathcal {I}_{h_R(E)}(2)) =0$, we have $h^0(\mathbb {P}^r,\mathcal {I}_A(2)) =0$. Hence $h^0(\mathbb {P}^r,\mathcal {I}_C(2))=0$
for a general $C\in W(d,g;r)$. Since $\rho (d,g,r)\ge 0$, $C$ is a linearly normal curve of degree $d$ and genus $g$ with general moduli. By semicontinuity we get the injectivity of $\mu _L$ for a general $X$ and a general $L$.

\quad (b). From now on (i.e. in steps (b), (c), (d), (e)) we assume $c > r/2$. In this step we assume $r$ even.  Notice that $g_{r,r/2} = r(r+1)/2$. Hence $\rho (g_{r,r/2},g_{r,r/2} +r -r/2 ,r) =0$, $W(g_{r,r/2}+r/2,g_{r,r/2};r)$
is defined and a general element of it has general moduli. Fix a general $Y\in W(g_{r,r/2}+r/2,g_{r,r/2};r)$. Since $Y$ has general moduli, \cite{bf}, Theorem 1, and (\ref{eqe1}) give $h^i(\mathbb {P}^r,\mathcal {I}_Y(2))=0$, $i=0,1$.
Set $k:= g -c(r+1)$. Fix a general $S\subset Y$ such that $\sharp (S) = (r+2)(c-r/2)$ and take a partition of $S$ into $c-r/2$ disjoint sets $S_i$, $1 \le i \le c-r/2$, such that $\sharp (S_i) = r+2$ for all $i$. Let $E\subset \mathbb {P}^r$ be a general union of $Y$, $c-r/2$ rational normal curves $E_i$, $1 \le i \le c-r/2$, such that $S_i  \subset  D_i$ for all $i$ and $k$ general secant lines $R_j$, $1\le j \le k$. We may find these rational normal curves and these lines so that $D_i\cap R_j=\emptyset$ for all $i, j$, $S_i=D_i\cap Y$ for all $i$, each $D_i$
intersects quasi-transversally $Y$, $D_i\cap D_h=\emptyset$ for all $i\ne h$, $R_j\cap R_k=\emptyset$ for all $i\ne j$, each $R_j$ intersects $Y$ quasi-transversally and $\sharp (R_i\cap Y) =2$ for all $i$ (first add the $k$ general secant lines
and then use Lemma \ref{e1}). Notice that $E$ is a nodal curve of degree $d$ and arithmetic genus $g$. Since $E\supseteq Y$, we have $h^0(\mathbb {P}^r,\mathcal {I}_E(2))=0$. By \cite{be4}, Lemmas 2.2 and 2.3, we have $E\in W(d,g;r)$. By semicontinuity
we have $h^0(\mathbb {P}^r,\mathcal {I}_F(2)) =0$ for a general $F\in W(d,g;r)$. Since $\rho (g,r,d) \ge 0$, $F$ has general moduli (\cite{be4}, Proposition 3.1).

\quad (c) From now on we assume $r$ odd. Since the case $r=3$ is true (e.g. by \cite{w}, Theorem 1.6, or by \cite{be2}, Theorem 1), we assume $r\ge 5$.

\quad  (d) In this step we prove the existence of $Y_2\in W((r^2+2r+1)/2,(r^2+r+2)/2;r)$ such that $h^i(\mathbb {P}^r,\mathcal {I}_{Y_2}(2))=0$, $i=0,1$. Notice that 
$g_{r,(r+1)/2} = r(r-1)/2 +r+1 = (r^2+r+2)/2$ and $g_{r,(r+1)/2}+r-(r+1)/2 = (r^2+2r+1)/2$. Hence $2((r^2+2r+1)/2) +1-(r^2+r+2)/2) = \binom{r+2}{2}$. The irreducible component $W((r^2+2r+1)/2,(r^2+r+2)/2;r)$ of $\mbox{Hilb}(\mathbb {P}^r)$ is defined, because $(r^2+2r+1)/2 \ge r$, $(r^2+r+2)/2 \ge 2$
and $(r^2+r+2)/2 \le (r^2+1)/2 + \lfloor (r^2-3)/(2r -4)\rfloor$ (the latter inequality is equivalent to the inequality $(r+1)(r-2) \le r^2-3$).
Notice that $\rho ((r^2+r+2)/2,r,(r^2+2r+1)/2)<0$ and hence this case corresponds
to a case with $d'-g' = c = (r+1)/2$, but in a range of triples $(d',g',r)$ for which there is no curve with general moduli. Fix a hyperplane $H\subset \mathbb {P}^r$. By \cite{bf}, Theorem 1, applied in $H $ there is a smooth curve $Y_1\subset H$ such that $Y_1\in W(g_{r-1,(r-1)/2}+(r-1)/2,g_{r-1,(r-1)/2};r-1)$
and $h^i(H,\mathcal {I}_{Y_1}(2)) =0$, $i=0,1$. We have $g_{r-1,(r-1)/2} = (r-1)r/2$, $g_{r-1,(r-1)/2}+(r-1)/2 = (r^2-1)/2$ and $\rho (g_{r-1,(r-1)/2},r-1,g_{r-1,(r-1)/2}+(r-1)/2) =0$. Fix a general
$S\subset Y_1$ such that $\sharp (S) =r+1$. Let $B\subset \mathbb {P}^r$ be a smooth and linearly normal elliptic curve such that $B\cap H = S$ ($B$ exists, because any two subsets of $H$ with cardinality $r+1$ and
in linearly general position are projectively equivalent). Set $Y_2:= Y_1\cup B$. The curve $Y_2$ is a connected and nodal curve with degree $(r^2+2r+1)/2$ and arithmetic genus $(r^2+r+2)/2$.  By \cite{bf}, Lemma 7, we have $Y_2 \in W((r^2+2r+1)/2,(r^2+r+2)/2;r)$.
Since $B$ is an elliptic curve of degree $\sharp (S)$, we have $h^0(B,\mathcal {O}_B(2)(-S)) =0$. Hence the Mayer-Vietoris exact sequence
$$0 \to \mathcal {O}_{Y_2}(2) \to \mathcal {O}_{Y_1}(2)\oplus \mathcal {O}_B(2) \to \mathcal {O}_S(2) \to 0$$
gives $h^1(Y_2,\mathcal {O}_{Y_2}(2))=0$. Since $2\cdot \deg (Y_2)+1-p_a(Y_2) = \binom{r+2}{2}$, we have $h^1(\mathbb {P}^r,\mathcal {I}_{Y_2}(2)) = h^0(\mathbb {P}^r,\mathcal {I}_{Y_2}(2)) $.
Fix $f\in H^0(\mathbb {P}^r,\mathcal {I}_{Y_2}(2)) $. Since $f\vert H$ vanishes on $Y_1$ and $h^0(H,\mathcal {I}_{Y_1}(2))=0$, $f$ is divided by the equation $z$ of $H$. Hence $f/z\in H^0(\mathbb {P}^r,\mathcal {I}_B(2))$. Since $B$ spans $\mathbb {P}^r$, we
get $f/z =0$, i.e. $f=0$. Hence $h^i(\mathbb {P}^r,\mathcal {I}_{Y_2}(2))=0$, $i=0,1$.

\quad (e) Here we assume $r\ge 5$, $r$ odd, $c = (r+1)/2$ and $g = (r^2+2r+1)/2$. Hence $d = (r+1)^2/2 + (r-1)/2 = (r^2+3r)/2$. Let $Y_3$ be a general union of $Y_2$ and $(r-1)/2$ secant lines of $Y_3$. Since
$h^0(\mathbb {P}^r,\mathcal {I}_{Y_2}(2))=0$, we have $h^0(\mathbb {P}^r,\mathcal {I}_{Y_3}(2))=0$. Since $Y_2\in W((r^2+2r+1)/2,(r^2+r+2)/2;r)$, we have $Y_3\in W((r^2+3r)/2,(r^2+2r+1)/2;r)$ (\cite{be4}, Lemma 2.2). By
semicontinuity we have $h^0(\mathbb {P}^r,\mathcal {I}_{Y_4}(2))=0$ for a general $Y_4\in W((r^2+3r)/2,(r^2+2r+1)/2;r)$. Since $\rho ((r^2+2r+1)/2,r,(r^2+3r)/2)=0$, $Y_4$ has general moduli.

\quad (f) In this step we assume $r$ odd, $r \ge 5$ and $(c,g) \ne ((r+1)/2,(r+1)^2/2)$, i.e. in this step we prove all the cases not yet proven. As in step (b) set
$k:= g-c(r+1)$. Fix a general $S\subset Y$ such that $\sharp (S) = (r+2)(c-r/2)$ and take a partition of $S$ into $c-r/2$ disjoint sets $S_i$, $1 \le i \le c-(r+1)/2$, such that $\sharp (S_i) = r+2$ for all $i$. Let $E_4\subset \mathbb {P}^r$ be a general union of $Y_4$, $c-(r+1)/2$ rational normal curves $D_i$, $1 \le i \le c-(r+1)/2$, such that $S_i  \subset  D_i$ for all $i$ and $k$ general secan lines $R_j$, $1\le j \le k$. We may find these rational normal curves and these lines so that $D_i\cap R_j=\emptyset$ for all $i, j$, $S_i=D_i\cap Y$ for all $i$, each $D_i$
intersects quasi-transversally $Y_4$, $D_i\cap D_h=\emptyset$ for all $i\ne h$, $R_j\cap R_k=\emptyset$ for all $i\ne j$, each $R_j$ intersects $Y$ quasi-transversally and $\sharp (R_i\cap Y) =2$ for all $i$. Since
$h^0(\mathbb {P}^r,\mathcal {I}_{E_4}(2)) \le h^0(\mathbb {P}^r,\mathcal {I}_{Y_4}(2))=0$, we conclude as in step (b).\qed

\providecommand{\bysame}{\leavevmode\hbox to3em{\hrulefill}\thinspace}


\begin{thebibliography}{99}

\bibitem{acgh} E. Arbarello, M. Cornalba, P. Griffiths and J. Harris, Geometry of Algebraic curves, I, Springer,
Berlin, 1985. 



\bibitem{b} E. Ballico, On the Maximal Rank Conjecture in $\mathbb {P}^4$, Int. J. Pure Appl. Math. 53 (2009), no. 3, 363--376.

\bibitem{be1} E. Ballico and Ph. Ellia, The maximal rank conjecture for nonspecial curves in ${\bf {P}}^3$,
Invent. Math. 79 (1985), no. 3, 541--555.

\bibitem{be2} E. Ballico and Ph. Ellia, Beyond the maximal rank conjecture for curves in $\mathbb {P}^3$, in: Space Curves,
Proceedings Rocca di Papa, pp. 1--23, Lecture Notes in Math. 1266, Springer, Berlin, 1985.

\bibitem{be3} E. Ballico and Ph. Ellia, The maximal rank conjecture for non-special curves in $\mathbb {P}^n$, Math. Z. 196
(1987), 355--367.


\bibitem{be4} E. Ballico and Ph. Ellia, On the existence of curves with maximal rank in $\mathbb {P}^n$, J. Reine Angew. Math.
397 (1989), 1--22.

\bibitem{bf0} E. Ballico and C. Fontanari, Normally generated line bundles on general curves, J. Pure Appl. Algebra 214 (2010), no. 6, 837--840.

\bibitem{bf} E. Ballico and C. Fontanari, Normally generated line bundles on general curves, II, J. Pure Appl. Algebra 214 (2010), no. 8, 1450--1455. 

\bibitem{eh} D. Eisenbud and J. Harris. Finite projective schemes in linearly general position,  J. Algebraic Geom.  1  (1992),  no. 1, 15--30.


\bibitem{s0} E. Sernesi, On the existence of certain families of curves, Invent. Math. 75 (1984), no. 1, 25--57.

\bibitem{t} M. Teixidor i Bigas, Injectivity of the symmetric map for line bundles, Manuscripta Math. 112 (2003), 511--517.

\bibitem{w} J. Wang, On the generic vanishing of certain cohomology groups, arXiv:1108.4714.



\end{thebibliography}
\end{document}